\author{Tuomas Orponen}
\address{Department of Mathematics and Statistics\\ University of Jyv\"askyl\"a,
	P.O. Box 35 (MaD)\\
	FI-40014 University of Jyv\"askyl\"a\\
	Finland} 
\email{\href{mailto:tuomas.t.orponen@jyu.fi}{tuomas.t.orponen@jyu.fi}}
\title{A note on higher integrability of projections}
\date{\today}
\subjclass[2010]{28A80 (primary) 28A78 (secondary)}
\keywords{Frostman measures, projections}
\thanks{T.O. is supported by the European Research Council (ERC) under the European Union’s Horizon Europe research and innovation programme (grant agreement No 101087499), and by the Research Council of Finland via the project \emph{Approximate incidence geometry}, grant no. 355453.}
\newcommand{\R}{\mathbb{R}}
\newcommand{\N}{\mathbb{N}}
\newcommand{\Hd}{\dim_{\mathrm{H}}}
\newcommand{\diam}{\operatorname{diam}}
\newcommand{\dist}{\operatorname{dist}}
\def\Barint_#1{\mathchoice
          {\mathop{\vrule width 6pt height 3 pt depth -2.5pt
                  \kern -8pt \intop}\nolimits_{#1}}%
          {\mathop{\vrule width 5pt height 3 pt depth -2.6pt
                  \kern -6pt \intop}\nolimits_{#1}}%
          {\mathop{\vrule width 5pt height 3 pt depth -2.6pt
                  \kern -6pt \intop}\nolimits_{#1}}%
          {\mathop{\vrule width 5pt height 3 pt depth -2.6pt
                  \kern -6pt \intop}\nolimits_{#1}}}
\numberwithin{equation}{section}
\theoremstyle{plain}
\newtheorem{thm}{Theorem}
\numberwithin{thm}{section}
\newtheorem*{"thm"}{"Theorem"}
\newtheorem{lemma}[thm]{Lemma}
\newtheorem{proposition}[thm]{Proposition}
\newtheorem{claim}[thm]{Claim}
\theoremstyle{definition}
\newtheorem{definition}[thm]{Definition}
\theoremstyle{remark}
\newtheorem{remark}[thm]{Remark}
\newcommand{\nref}[1]{(\hyperref[#1]{#1})}
\DeclareMathSymbol{\intop}  {\mathop}{mathx}{"B3}
\begin{document}

\begin{abstract} Let $t \in [1,2)$ and $p > 2/(2 - t)$. I construct a $t$-Frostman Borel measure $\mu$ on $[0,1]^{2}$ such that $\pi_{\theta}\mu \notin L^{p}$ for every $\theta \in S^{1}$. This answers a question of Peres and Schlag. \end{abstract}

\maketitle


\section{Introduction}

A Borel measure $\mu$ on $\R^{d}$ is called \emph{$t$-Frostman} if there exists a constant $C > 0$ such that $\mu(B(x,r)) \leq Cr^{t}$ for all $x \in \R^{d}$ and $r > 0$. For $\theta \in S^{1}$, the orthogonal projection $\R^{2} \to \mathrm{span}(\theta^{\perp})$ is denoted $\pi_{\theta}$.

Marstrand \cite{Mar} in 1954 showed that if $\mu$ is a compactly supported $t$-Frostman measure on $\R^{2}$ with $t > 1$, then $\pi_{\theta}\mu$ is absolutely continuous with density in $L^{2}(\R)$ for $\mathcal{H}^{1}$ almost every $\theta \in S^{1}$. Can $L^{2}$ be improved to $L^{p}$ for some $p = p(t) > 2$? If $\mu$ is $2$-Frostman, then $\mu$ is a bounded compactly supported function, hence $\pi_{\theta} \mu \in L^{\infty}$ for all $\theta \in S^{1}$.

For $t \in (1,2)$, Peres and Schlag \cite{MR1749437} noted that for $p \in [1,2/(2 - t))$ it holds $\pi_{\theta}\mu \in L^{p}$ for $\mathcal{H}^{1}$ almost every $\theta \in S^{1}$. This is a consequence of the Sobolev embedding theorem, and the fact that a.e. projection $\pi_{\theta}\mu$ lies in the fractional Sobolev space $H^{\sigma}$ for $\sigma < (t - 1)/2$. The details, also in higher dimensions, are given in \cite[Section 3.1]{DOV22}. In fact, the following slightly stronger conclusion holds:
\begin{equation}\label{form10} \mathfrak{I}(p,2) := \int_{S^{1}} \|\pi_{\theta}\mu\|_{L^{p}}^{2} \, d\mathcal{H}^{1}(\theta) < \infty, \qquad p \in [1,2/(2 - t)). \end{equation} 
Peres and Schlag asked \cite[Section 9.2(ii)]{MR1749437} whether the integrability threshold $2/(2 - t)$ can be improved. The purpose of this note is to show that the answer is negative:

\begin{thm}\label{thm:main} Let $t \in [1,2)$ and $p > 2/(2 - t)$. Then, there exists a $t$-Frostman Borel measure $\mu$ on $[0,1]^{2}$ such that $\pi_{\theta}\mu \notin L^{p}(\R)$ for every $\theta \in S^{1}$. \end{thm}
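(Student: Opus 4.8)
The plan is to realise $\mu$ as the natural measure on a nested Cantor-type set, inside every surviving cell of which one plants, at the cell's own scale, a thickened line segment in every direction of a short arc, the arcs attached to the different cells of a given generation being chosen so as to cover all of $S^{1}$. \textbf{Reduction to a tube estimate.} The hypothesis $p>2/(2-t)$ is equivalent to $1-1/p>t/2$, so we may fix an exponent $\beta$ with $t/2<\beta<1-1/p$. It then suffices to produce a $t$-Frostman probability measure $\mu$ on $[0,1]^{2}$ with the property that for every $\theta\in S^{1}$ there are arbitrarily small $\delta>0$ and a $\delta$-tube $T$ (the $\delta$-neighbourhood of a line segment) parallel to $\theta$ with $\mu(T)\geq\delta^{\beta}$. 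Indeed $\pi_{\theta}(T)$ is then an interval of length $\sim\delta$, so if $\pi_{\theta}\mu$ had an $L^{p}$ density $f_{\theta}$, H\"older's inequality would force $\delta^{\beta}\leq\mu(T)=\int_{\pi_{\theta}(T)}f_{\theta}\,dx\leq C\delta^{1-1/p}\|f_{\theta}\|_{L^{p}}$, and letting $\delta\downarrow0$ contradicts $\beta<1-1/p$ (when $\pi_{\theta}\mu$ is not absolutely continuous there is nothing to prove).

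\textbf{The construction.} Choose scales $1=\ell_{0}>\ell_{1}>\ell_{2}>\cdots$ with
\[ \ell_{j-1}^{\,2/(2-t)}\ \leq\ \ell_{j}\ \leq\ \ell_{j-1}^{\,1/(\beta-t+1)},\qquad j\geq1. \]
Here $\beta-t+1>0$ because $\beta>t/2>t-1$ (as $t<2$), and $\beta>t/2$ is precisely what makes $1/(\beta-t+1)<2/(2-t)$, so the admissible range for $\ell_{j}$ is non-degenerate; both exponents exceed $1$ (since $\beta<1\leq t$), so $\ell_{j}\to0$. Now build compact sets $E_{0}\supseteq E_{1}\supseteq\cdots$, with $E_{j}$ a union of $\sim\ell_{j}^{-t}$ axis-parallel squares (``$j$-cells'') of side $\ell_{j}$, and let $\mu$ be the weak limit of the probability measures spreading unit mass uniformly over the $j$-cells, so that each $j$-cell carries mass $\sim\ell_{j}^{t}$ and each $(j-1)$-cell carries mass $\sim\ell_{j-1}^{t}$. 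The inductive step: inside each $(j-1)$-cell $Q$ retain $\sim(\ell_{j-1}/\ell_{j})^{t}$ of its $(\ell_{j-1}/\ell_{j})^{2}$ subsquares of side $\ell_{j}$, arranged into $\sim(\ell_{j-1}/\ell_{j})^{t-1}$ pairwise essentially disjoint $j$-\emph{tubes}, each of length $\sim\ell_{j-1}$ and running in its own direction, the directions filling an arc $\Theta_{Q}\subseteq S^{1}$ of length $\sim(\ell_{j}/\ell_{j-1})^{2-t}$; and do this so that the retained subsquares inside $Q$ remain $t$-regularly distributed, i.e.\ every ball of radius $r\in[\ell_{j},\ell_{j-1}]$ meets $\lesssim(r/\ell_{j})^{t}$ of them. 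Finally, since there are $\sim\ell_{j-1}^{-t}$ cells $Q$ and $\ell_{j-1}^{-t}\cdot(\ell_{j}/\ell_{j-1})^{2-t}\gtrsim1$ — this is exactly the lower bound on $\ell_{j}$ — the arcs $\Theta_{Q}$ can be assigned so that $\bigcup_{Q}\Theta_{Q}=S^{1}$ at every generation.

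\textbf{Why this works.} The $t$-regularity imposed at each step telescopes across scales to give $\mu(B(x,r))\lesssim r^{t}$, so $\mu$ is $t$-Frostman. Conversely, given $\theta\in S^{1}$ and $j\geq1$, by construction $\theta\in\Theta_{Q}$ for some $(j-1)$-cell $Q$, so $E_{j}\cap Q$ contains a $j$-tube $T$ of length $\sim\ell_{j-1}$ parallel to $\theta$, made of $\sim\ell_{j-1}/\ell_{j}$ $j$-cells; hence
\[ \mu(T)\ \gtrsim\ \frac{\ell_{j-1}}{\ell_{j}}\cdot\ell_{j}^{t}\ =\ \ell_{j-1}\ell_{j}^{\,t-1}\ \geq\ \ell_{j}^{\,\beta}, \]
the last step being the upper bound $\ell_{j}\leq\ell_{j-1}^{1/(\beta-t+1)}$. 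Since $T$ is a $\delta$-tube parallel to $\theta$ with $\delta\sim\ell_{j}\to0$ and $\mu(T)\geq\delta^{\beta}$, the reduction gives $\pi_{\theta}\mu\notin L^{p}(\R)$, for every $\theta$.

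\textbf{The main obstacle.} Everything above is bookkeeping except the inductive step inside a single cell: fitting a full-length tube in every direction of an arc of the threshold length $(\ell_{j}/\ell_{j-1})^{2-t}$ while the retained subsquares stay $t$-regular. The naive candidates (all tubes through one point, or placed side by side in a slab) fail for $t>1$: such configurations are overly concentrated at the intermediate scales $r\in[(\ell_{j}/\ell_{j-1})^{2-t}\ell_{j-1},\ \ell_{j-1}]$, by a factor that is a positive power of $\ell_{j-1}/\ell_{j}$. One must instead translate the tubes against each other in the manner of a Perron tree, iterated through all the scales between $\ell_{j}$ and $\ell_{j-1}$, so that they overlap just enough to disperse $t$-dimensionally while staying essentially disjoint and individually one-dimensional — a quantitative Besicovitch/Kakeya construction performed exactly at the exponent $2/(2-t)$. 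Any logarithmic losses it incurs are harmless, since $\beta$ need only be a fixed amount below $1-1/p$; this is where the strict inequality $p>2/(2-t)$, rather than $p\geq2/(2-t)$, enters.
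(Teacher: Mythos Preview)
Your reduction to a tube/H\"older estimate and the overall Cantor framework are sound and match the paper's strategy (Remark~\ref{rem1} and Section~\ref{s3}). The gap is exactly where you flag it: the single-scale building block. You propose to fill each $(j-1)$-cell with $\sim N^{t-1}$ \emph{full-length} $1/N$-tubes (here $N=\ell_{j-1}/\ell_j$) whose directions span an arc of length $N^{t-2}$, and you need this configuration to be simultaneously (a) essentially disjoint, so that the cell really contains $\sim N^{t}$ subsquares and the mass bookkeeping works, and (b) $t$-regular at every intermediate scale. Your sentence ``overlap just enough to disperse $t$-dimensionally while staying essentially disjoint'' is a wish, not a construction: Perron-tree arguments produce small area precisely by creating \emph{heavy} overlap, which is in direct tension with (a), and in any case the standard Perron tree yields an area bound, not a scale-by-scale $(1/N,t)$-set property. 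Nothing in the proposal verifies that a configuration satisfying both (a) and (b) exists, and this is the entire content of the single-scale step.

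The paper avoids this obstacle by a different, much simpler, single-scale construction (Proposition~\ref{mainProp}): instead of full-length tubes it uses \emph{short sticks} of length $\delta^{1-t/2}$ (so $N^{t/2-1}$ after your rescaling). A stick of this length projects to a $\delta$-interval for an arc of directions of length $\delta^{t/2}$, so $\sim\delta^{-t/2}$ sticks already cover all of $S^{1}$ inside a \emph{single} unit cell; each stick carries $\sim\delta^{-t/2}$ squares, giving $\sim\delta^{-t}$ squares in total. Because the sticks are short, they fit inside well-separated discs of radius $\sim\delta^{1-t/2}$---one stick per disc when $t<4/3$, or several arranged as a ``cartwheel'' (radial segments avoiding the centre) when $t\ge 4/3$---and the $(\delta,t)$-set verification becomes a two-line counting argument with no Kakeya input. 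This is the idea you are missing: shortening the sticks to length $\delta^{1-t/2}$ balances the two exponents ($\delta^{-t/2}$ sticks, $\delta^{-t/2}$ squares per stick) and makes the regularity elementary.
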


Theorem \ref{thm:main} is proved by first establishing in Section \ref{s:deltaDiscretised} a $\delta$-discretised version (Proposition \ref{mainProp}),  and then "stitching" the $\delta$-discretised measures together to generate the final measure $\mu$ by a standard Cantor set procedure. The details are given in Section \ref{s3}. 

\subsection{Related work and open problems} The simple construction given here leaves open the possibility that $\pi_{\theta}\mu \in L^{2/(2 - t)}$ for at least one (or perhaps a.e.) $\theta \in S^{1}$. Another open question remains to determine all pairs $(p,q) \in [1,\infty] \times [1,\infty]$ such that $\mathfrak{I}(p,q) < \infty$ under the $t$-Frostman hypothesis (where $I(p,q)$ is defined as in \eqref{form10}). It was shown in \cite{DOV22} that $\mathfrak{I}(p,p) < \infty$ for $p < (3 - t)/(2 - t)$, and this threshold cannot be improved. More refined information (and mixed norm estimates) was obtained by Liu \cite[Proposition 10.1]{MR4739824}. In \cite{MR4739824} the author also studies the counterpart of the problem for \emph{radial projections}. In \cite{2024arXiv240315784L}, Li and Liu studied a generalised version of the mixed-norm problem, where $\mathcal{H}^{1}$ in \eqref{form10} is replaced by a $\sigma$-Frostman measure on $S^{1}$ (or in general the Grassmannian $G(d,n)$).

\subsection*{Acknowledgements} I would like to thank the reviewer for many useful comments. 

\section{A $\delta$-discretised counterpart of Theorem \ref{thm:main}}\label{s:deltaDiscretised}

In the following, if $A \subset \R^{d}$ and $\delta \in 2^{-\N}$, the notation $\mathcal{D}_{\delta}(A)$ refers to the family of dyadic $\delta$-squares intersecting $A$. In the case $A = \R^{d}$ I abbreviate $\mathcal{D}_{\delta}(\R^{d}) =: \mathcal{D}_{\delta}$.

\begin{definition}[$(\delta,t,C)$-set] Let $C,t \geq 0$. For $\delta,\Delta \in 2^{-\N}$ with $\delta \leq \Delta$, a family $\mathcal{P} \subset \mathcal{D}_{\delta}$ is called a \emph{$(\delta,t,C)$-set between scales $\delta$ and $\Delta$} if
\begin{displaymath} |\mathcal{P} \cap B(x,r)| \leq C(r/\delta)^{t}, \qquad x \in \R^{d}, \, \delta \leq r \leq \Delta. \end{displaymath} 
Here $\mathcal{P} \cap B(x,r) := \{p \in \mathcal{P} : p \cap B(x,r) \neq \emptyset\}$. If $\Delta = 1$, and the constant $C$ is absolute, a $(\delta,t,C)$-set between scales $\delta$ and $\Delta$ will be called a $(\delta,t)$-set.   \end{definition} 

\begin{remark} $(\delta,t)$-sets are commonly known as \emph{Katz-Tao $(\delta,t)$-sets} in reference to \cite{KT01}.  \end{remark} 

Here is the $\delta$-discretised counterpart of Theorem \ref{thm:main}:

\begin{proposition}\label{mainProp} Let $t \in [1,2)$. Then, for all $\delta \in 2^{-\N}$ small enough, there exists a $(\delta,t)$-set $\mathcal{P} \subset \mathcal{D}_{\delta}([0,1]^{2})$ with $|\mathcal{P}| \sim \delta^{-t}$ such that the uniform probability measure $\mu$ on $\cup \mathcal{P}$ satisfies 
\begin{equation}\label{form1} \|\pi_{\theta}\mu\|_{L^{p}} \gtrsim \delta^{1/p - (2 - t)/2}, \qquad \theta \in S^{1}, \, p \geq 1. \end{equation} 
More precisely, for all $\theta \in S^{1}$ there exists an interval $I_{\theta} \subset \R$ of length $|I_{\theta}| \lesssim \delta$, and a set of squares $\mathcal{S}(\theta) \subset \mathcal{P}$ with $|\mathcal{S}(\theta)| \gtrsim \delta^{-t/2}$ such that $\pi_{\theta}(\cup \mathcal{S}(\theta)) \subset I_{\theta}$.
\end{proposition}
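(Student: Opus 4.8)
The plan is to build $\mathcal{P}$ so that, simultaneously for every direction $\theta$, a definite fraction of the mass projects into a single interval of length $\sim \delta$. The natural way to force this is to put the $\delta$-squares of $\mathcal{P}$ along a \emph{curve} rather than along a line. If $\gamma \colon [0,1] \to [0,1]^{2}$ is a smooth curve with curvature bounded above and below, then for a fixed $\theta$ the function $t \mapsto \pi_{\theta}(\gamma(t))$ has a critical point $t_{\theta}$ (where $\gamma'(t_\theta) \perp \theta^{\perp}$, i.e.\ $\gamma'(t_\theta) \parallel \theta$), and near $t_\theta$ the projection behaves quadratically: $|\pi_\theta(\gamma(t)) - \pi_\theta(\gamma(t_\theta))| \lesssim |t - t_\theta|^{2}$. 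Hence all points $\gamma(t)$ with $|t - t_\theta| \lesssim \delta^{1/2}$ project into an interval $I_\theta$ of length $\lesssim \delta$. This quadratic flatness is the whole point, and it is what gives the exponent $\tfrac12$ in $|\mathcal{S}(\theta)| \gtrsim \delta^{-t/2}$.

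\textbf{Step 1: choose the curve and a $\delta$-net on it.} Fix a parabola-like arc, e.g.\ $\gamma(s) = (s, cs^{2})$ for a suitable small constant $c$, so that $\gamma([0,1]) \subset [0,1]^{2}$ and the curvature is comparable to $1$. More precisely, to make every direction $\theta \in S^{1}$ admit a tangent-parallel point one actually needs the tangent directions of $\gamma$ to sweep out a full neighbourhood of all of $S^{1}$; since a single parabola only realises tangent directions in an arc, I would instead take $\gamma$ to be (a smooth reparametrisation of) a circle $\{|x - x_0| = \tfrac14\} \subset [0,1]^2$, whose unit tangent runs over all of $S^1$. Along $\gamma$ place the points $\gamma(j\delta)$, $j = 0, 1, \dots, N$ with $N \sim \delta^{-1}$, and let $\mathcal{P} \in \mathcal{D}_\delta([0,1]^2)$ consist of the $\delta$-squares containing these points. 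Then $|\mathcal{P}| \sim \delta^{-1}$. This is a $(\delta,1,C)$-set with an absolute constant $C$: since $\gamma$ is $1$-Lipschitz and bi-Lipschitz onto its image, any ball $B(x,r)$ with $\delta \le r \le 1$ meets $\lesssim r/\delta$ of the squares.

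\textbf{Step 2: thicken to make it a $(\delta,t)$-set with $|\mathcal{P}| \sim \delta^{-t}$.} A curve only gives a $(\delta,1)$-set, but we need exponent $t \in [1,2)$ and cardinality $\delta^{-t}$. Replace each of the $\sim\delta^{-1}$ curve-squares $Q$ by a ``fattened'' family: inside $Q$ choose a Katz--Tao $(\delta, t-1)$-subset of $\mathcal{D}_\delta(Q)$ of cardinality $\sim \delta^{-(t-1)}$ (a standard $\delta^{1/(t-1)}$-spaced grid construction works, with an absolute constant). The union over all curve-squares has cardinality $\sim \delta^{-1}\cdot\delta^{-(t-1)} = \delta^{-t}$, and a two-scale check (balls of radius $r \ge \delta$: if $r$ is large use the $(\delta,1)$-property of the coarse curve times the $(\delta,t-1)$-property inside one square; if $r$ is small only one or two curve-squares are relevant) shows the result is a $(\delta,t,C)$-set with $C$ absolute. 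Let $\mu$ be the uniform probability measure on $\cup\mathcal{P}$.

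\textbf{Step 3: the projection estimate.} Fix $\theta \in S^1$. By construction of $\gamma$ (Step 1) there is a parameter $s_\theta$ with $\gamma'(s_\theta) \parallel \theta$, and the quadratic bound above shows that the $\sim \delta^{-1/2}$ coarse curve-squares $Q$ with centre within distance $\lesssim\delta^{1/2}$ of $\gamma(s_\theta)$ all satisfy $\pi_\theta(Q) \subset I_\theta$ for one interval $I_\theta$ with $|I_\theta| \lesssim \delta$ (absorbing the $O(\delta)$ from the square size). Let $\mathcal{S}(\theta) \subset \mathcal{P}$ be the union of the fattened families sitting inside those coarse squares; then $|\mathcal{S}(\theta)| \gtrsim \delta^{-1/2}\cdot\delta^{-(t-1)} = \delta^{-(t - 1/2)} = \delta^{-t/2}\cdot\delta^{-(t-1)/2}\ge \delta^{-t/2}$ — wait, I only need $|\mathcal{S}(\theta)|\gtrsim\delta^{-t/2}$, and since $t - \tfrac12 \ge \tfrac{t}{2}$ for $t \ge 1$ this holds. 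Now $\mu(\cup\mathcal{S}(\theta)) = |\mathcal{S}(\theta)|/|\mathcal{P}| \gtrsim \delta^{-t/2}/\delta^{-t} = \delta^{t/2}$, so $\pi_\theta\mu$ has mass $\gtrsim \delta^{t/2}$ concentrated in an interval of length $\lesssim\delta$; by Hölder (or just by the fact that $\|f\|_{L^p([a,b])} \ge (b-a)^{1/p - 1}\int_a^b f$),
\begin{equation*}
\|\pi_\theta\mu\|_{L^p} \gtrsim \delta^{t/2} \cdot \delta^{-(1 - 1/p)} = \delta^{1/p - (2-t)/2},
\end{equation*}
which is \eqref{form1}. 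The more precise statement about $I_\theta$ and $\mathcal{S}(\theta)$ is exactly what Steps 1--3 produced.

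\textbf{Main obstacle.} The delicate point is Step 2: marrying the curvature mechanism (which lives on the $1$-dimensional curve and is responsible for the $\delta^{1/2}$-interval) with the fattening that boosts the dimension from $1$ to $t$, while keeping the Katz--Tao constant \emph{absolute} and uniform in $\delta$ at all intermediate scales $\delta \le r \le 1$. One must check that the $(\delta,t,C)$-condition survives the two-scale combination — the potential danger is a ball of radius $r \approx \delta^{1/(t-1)}$ straddling several curve-squares — and this requires choosing the internal spacing so that the ``coarse'' $(\delta,1)$-count and the ``fine'' $(\delta,t-1)$-count multiply correctly rather than conflicting. Everything else (the quadratic estimate, the Hölder step, the cardinality count) is routine.
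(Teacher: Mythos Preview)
Your curvature idea is sound for $t=1$ (the bare circle already does the job), but Step~2 collapses for $t>1$: the ``curve-squares'' $Q$ you produced in Step~1 are themselves $\delta$-squares, so $\mathcal{D}_\delta(Q)=\{Q\}$ and there is no room to place $\delta^{-(t-1)}$ further $\delta$-squares ``inside $Q$''. The fattening is therefore vacuous, and without it the construction has only $|\mathcal{P}|\sim\delta^{-1}$ and delivers only $|\mathcal{S}(\theta)|\sim\delta^{-1/2}$, which for $t>1$ is strictly weaker than the required $\delta^{-t/2}$.

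Nor is there a simple repair. Thickening the circle to an annulus of width $\delta^{2-t}$ gives the right cardinality $\delta^{-t}$ but destroys both the Katz--Tao property (a ball of radius $r$ with $\delta^{2-t}\le r\ll 1$ meets $\sim r\delta^{-t}\gg (r/\delta)^t$ squares) and the projection bound (the tube now projects to an interval of length $\sim\delta^{2-t}$, not $\delta$). Passing to a coarser scale $\Delta>\delta$ on the curve and filling each $\Delta$-square with a $(\delta,t-1)$-set yields projection intervals of length $\sim\Delta$, not $\delta$. The underlying obstruction is that on any curve of bounded curvature inside $[0,1]^2$, an arc whose $\pi_\theta$-image has diameter $\lesssim\delta$ has length at most $\sim\delta^{1/2}$, hence contains at most $\sim\delta^{-1/2}$ of the $\delta$-squares; the exponent $-t/2$ cannot be manufactured from curvature alone when $t>1$.

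The paper takes a different route: it uses \emph{straight} line segments (``sticks'') of length $\delta^{1-t/2}$, each consisting of $\sim\delta^{-t/2}$ squares. A stick with direction $\theta_0$ projects to a $\delta$-interval under $\pi_\theta$ for every $\theta\in B(\theta_0,\delta^{t/2})$, so $\sim\delta^{-t/2}$ sticks with equispaced directions suffice to cover $S^1$, and the total count is $\delta^{-t/2}\cdot\delta^{-t/2}=\delta^{-t}$. The remaining work is to translate the sticks so that their union is a $(\delta,t)$-set; the paper does this by distributing them among a grid of discs of radius $\sim\delta^{1-t/2}$ (with separate arguments for $t<\tfrac{4}{3}$ and $t\ge\tfrac{4}{3}$). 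In short, the mechanism needed for $t>1$ is \emph{directional}, not curvature-based, and your Step~2 does not bridge that gap.
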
 

\begin{remark}\label{rem1} The lower bound \eqref{form1} follows from the statement below it, because it implies that $\pi_{\theta}\mu(I_{\theta}) \gtrsim \delta^{t} \cdot |\mathcal{S}(\theta)| \gtrsim \delta^{t/2}$. Therefore, by H\"older's inequality,
\begin{displaymath} \delta^{t/2} \lesssim \int_{I_{\theta}} \pi_{\theta}\mu \leq |I_{\theta}|^{(p - 1)/p}\|\pi_{\theta}\mu\|_{L^{p}} \lesssim \delta^{(p - 1)/p}\|\pi_{\theta}\mu\|_{L^{p}}, \qquad \theta \in S^{1}. \end{displaymath} \end{remark} 

\begin{proof}[Proof of Proposition \ref{mainProp}] The building block in the construction of $\mathcal{P}$ is a \emph{stick}. A stick (or a $t$-stick) $\mathcal{S}$ is a maximal family of $\delta$-squares intersecting a line segment $L \subset \R^{2}$ of length $\delta^{1 - t/2}$. In symbols, $\mathcal{S} = \mathcal{D}_{\delta}(L)$. Note that $|\mathcal{S}| \sim \delta^{-t/2}$.  A line segment $L$ \emph{has direction $\theta_{0} \in S^{1}$} if $L$ is parallel to $\mathrm{span}(\theta_{0})$. A stick $\mathcal{S} = \mathcal{D}_{\delta}(L)$ \emph{has direction $\theta_{0}$} if $L$ has direction $\theta_{0}$.

Regardless of $t \in [1,2)$, the set $\mathcal{P}$ will be constructed as a boundedly overlapping union of $\sim \delta^{-t/2}$ sticks. This ensures that $|\mathcal{P}| \sim \delta^{-t}$.

\subsection{Projections of a single stick} The specific arrangements of sticks in the cases $t \in [\tfrac{4}{3},2)$ and $t \in [1,\tfrac{4}{3})$ are slightly different, see Figures \ref{fig1} and \ref{fig2}, respectively. Before going to the details, we record the following elementary but key property of sticks:
\begin{claim}\label{c1} Let $\theta_{0} \in S^{1}$, and let $\mathcal{S} = \mathcal{D}_{\delta}(L) \subset \mathcal{D}_{\delta}$ be a $t$-stick with direction $\theta_{0}$. Then, 
\begin{displaymath} \diam(\pi_{\theta}(\cup \mathcal{S})) \lesssim \delta, \qquad \theta \in B(\theta_{0},\delta^{t/2}). \end{displaymath} \end{claim}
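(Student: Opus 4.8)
The plan is to estimate the diameter of $\pi_\theta(\cup\mathcal{S})$ by splitting the distance between two projected points into a component along the line $L$ and a component coming from the thickening of $L$ to $\cup\mathcal{S}$. First I would fix two squares $Q, Q' \in \mathcal{S}$ and pick points $x \in Q$, $x' \in Q'$. Since $\mathcal{S} = \mathcal{D}_\delta(L)$, each of $x, x'$ lies within distance $O(\delta)$ of the segment $L$; choose $y, y' \in L$ with $|x - y| \lesssim \delta$ and $|x' - y'| \lesssim \delta$. Then
\begin{displaymath} |\pi_\theta(x) - \pi_\theta(x')| \leq |\pi_\theta(x) - \pi_\theta(y)| + |\pi_\theta(y) - \pi_\theta(y')| + |\pi_\theta(y') - \pi_\theta(x')| \lesssim \delta + |\pi_\theta(y) - \pi_\theta(y')|, \end{displaymath}
using that $\pi_\theta$ is $1$-Lipschitz. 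So the whole claim reduces to showing $|\pi_\theta(y) - \pi_\theta(y')| \lesssim \delta$ for all $y, y' \in L$ and all $\theta \in B(\theta_0, \delta^{t/2})$.

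Next I would exploit that $L$ has direction $\theta_0$ and length $\lesssim \delta^{1 - t/2}$. Write $y - y' = s\theta_0$ with $|s| \leq |L| \lesssim \delta^{1 - t/2}$. Then $\pi_\theta(y) - \pi_\theta(y') = s\,\pi_\theta(\theta_0)$, and since $\pi_\theta$ is the orthogonal projection onto $\spa(\theta^\perp)$, we have $|\pi_\theta(\theta_0)| = |\langle \theta_0, \theta^\perp \rangle| = |\sin\angle(\theta,\theta_0)| \leq |\theta - \theta_0|$. For $\theta \in B(\theta_0, \delta^{t/2})$ this gives $|\pi_\theta(\theta_0)| \lesssim \delta^{t/2}$, hence
\begin{displaymath} |\pi_\theta(y) - \pi_\theta(y')| = |s|\,|\pi_\theta(\theta_0)| \lesssim \delta^{1 - t/2} \cdot \delta^{t/2} = \delta. \end{displaymath}
Combining with the previous paragraph yields $\diam(\pi_\theta(\cup\mathcal{S})) \lesssim \delta$, as claimed.

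I do not expect any genuine obstacle here: the statement is, as the authors say, "elementary," and the only thing to be slightly careful about is bookkeeping the implicit constants (the $O(\delta)$ thickening of $L$, the constant relating arclength on $S^1$ to Euclidean distance, and the constant in $|\mathcal{S}| \sim \delta^{-t/2}$), none of which depend on $\delta$ or $\theta$. The one conceptual point worth highlighting is the exact matching of exponents: the stick length $\delta^{1-t/2}$ and the angular window $\delta^{t/2}$ are chosen precisely so that their product is $\delta$, which is what makes the claim tight and is presumably why these scales appear in the definition of a stick.
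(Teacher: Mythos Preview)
Your proof is correct and follows essentially the same approach as the paper's: both reduce to the estimate $\diam(\pi_\theta(L)) \lesssim \delta$ for $|\theta - \theta_0| \leq \delta^{t/2}$ (which you make explicit via $|s|\,|\pi_\theta(\theta_0)| \lesssim \delta^{1-t/2}\cdot\delta^{t/2}$), and then pass to $\cup\mathcal{S}$ using that $\pi_\theta$ is $1$-Lipschitz. Your version simply spells out the details the paper leaves implicit.
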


\begin{proof} Note that $\pi_{\theta_{0}}(L)$ is a singleton (since in this note $\pi_{\theta_{0}}$ stands for the projection to $\theta_{0}^{\perp}$), and recall that $L$ is a segment of length $\delta^{1 - t/2}$. It follows that $\diam(\pi_{\theta}(L)) \lesssim \delta$ for $|\theta - \theta_{0}| \leq \delta^{t/2}$. Since $\pi_{\theta}$ is $1$-Lipschitz, the same inequality persists for $\cup \mathcal{S} = \cup \mathcal{D}_{\delta}(L)$.  \end{proof} 

We deduce the following consequence of Claim \ref{c1}: in order to prove Proposition \ref{mainProp}, it suffices to arrange the $m \sim \delta^{-t/2}$ sticks $\mathcal{S}_{1},\ldots,\mathcal{S}_{m}$ constituting $\mathcal{P}$ in such a way that the $\delta^{t/2}$-neighbourhoods of their directions $\theta_{1},\ldots,\theta_{m}$ cover $S^{1}$. 

\begin{figure}[h!]
\begin{center}
\begin{overpic}[scale = 0.6]{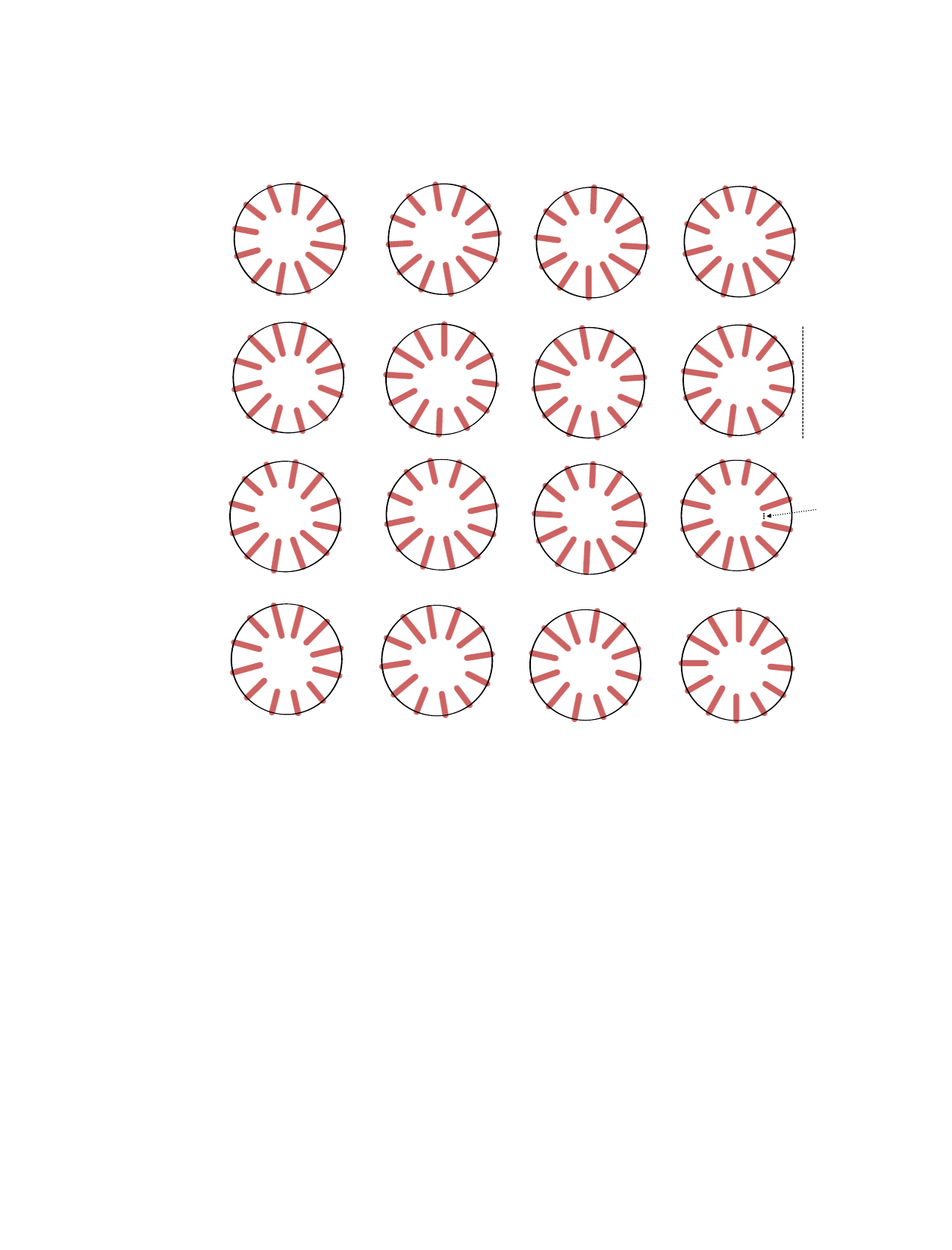}
\put(-4,81){$\mathcal{P}$}
\put(100,36){$\sim \delta^{t - 1}$}
\put(99,57){$20\delta^{1 - t/2}$}
\end{overpic}
\caption{Depiction of $\mathcal{P}$ for $t \in [\tfrac{4}{3},2)$. The red sticks have length $\delta^{1 - t/2}$. As the parameter $t$ increases, the number of "cartwheels" in the picture decreases, and the separation of the sticks decreases. In the extreme case $t = 2$, there would be only one cartwheel of diameter $\sim 1$.}\label{fig1}
\end{center}
\end{figure}

\subsection{Arrangement of sticks: preliminaries}\label{s1} Motivated by the previous explanation, we fix $m \sim \delta^{-t/2}$ \emph{initial sticks} $\mathcal{S}^{0}_{i} := \mathcal{D}_{\delta}(L_{i}^{0})$ such that $0 \in L_{i}^{0}$, and the directions $\theta_{i}$ of $L_{i}^{0}$ are equally spaced on $S^{1}$. Then the desired covering property $S^{1} \subset \bigcup_{i} B(\theta_{i},\delta^{t/2})$ is valid for some $m \sim \delta^{-t/2}$. Eventually we will define
\begin{equation}\label{form11} \mathcal{P} := \bigcup_{i = 1}^{m} \mathcal{S}_{i} := \bigcup_{i = 1}^{m} (\mathcal{S}^{0}_{i} + z_{i}), \end{equation}
where $z_{i} \in \R^{2}$. We will choose the translations $z_{i}$ so that the sticks $\mathcal{S}_{i}$ are disjoint, provided $\delta > 0$ is small enough depending only on $2 - t$ (the initial sticks evidently fail this property), and $\mathcal{P}$ is a $(\delta,t)$-set. There is a difference between the cases $t < \tfrac{4}{3}$ and $t \geq \tfrac{4}{3}$ which we now explain.

Note that every stick has diameter $\leq \delta^{1 - t/2} + 2\delta \leq 2\delta^{1 - t/2}$. Motivated by this, let $B_{1},\ldots,B_{n}$ be a maximal $\delta^{1 - t/2}$-separated collection of discs of radius $10\delta^{1 - t/2}$ contained in $[0,1]^{2}$, see Figure \ref{fig1}. Evidently $n \sim \delta^{t - 2}$. The plan is to place each stick $\mathcal{S}_{i}$ inside one of the discs $B_{j}$, in such a way that each disc $B_{j}$ contains (roughly) the same number of sticks. Since there are $m \sim \delta^{-t/2}$ sticks, this means that each disc $B_{j}$ needs to contain
\begin{displaymath} \sim \frac{m}{n} \sim \delta^{2 - 3t/2} \end{displaymath}
sticks. Now observe that $2 - 3t/2 \leq 0$ if and only if $t \geq 4/3$. Thus, if $t < 4/3$, we have $m < n$, and there are not enough sticks to occupy each disc $B_{1},\ldots,B_{n}$. Indeed, the case $t < 4/3$ leads to a simpler construction which we dispose of immediately.

\begin{figure}[h!]
\begin{center}
\begin{overpic}[scale = 0.6]{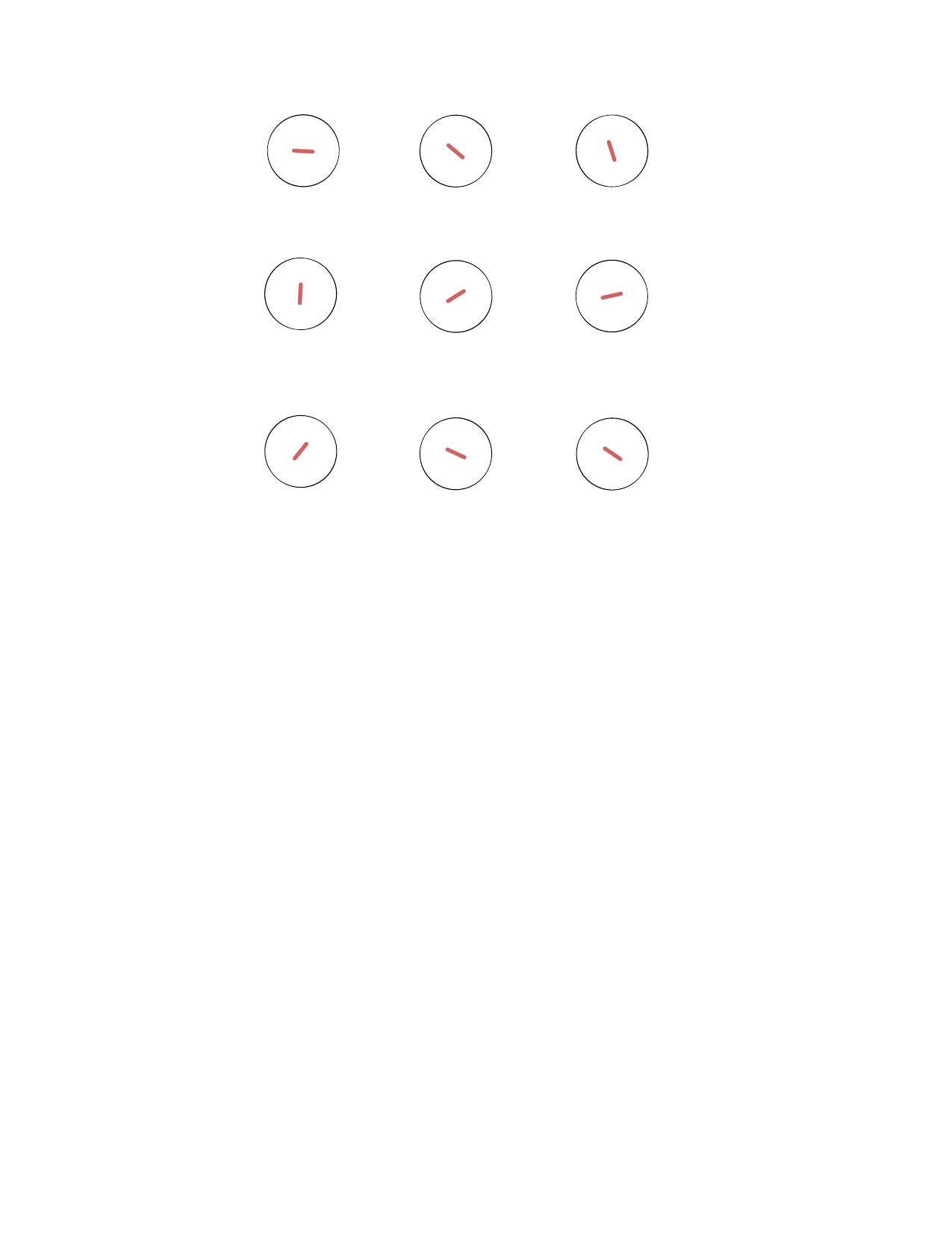}
\put(-6,85){$\mathcal{P}$}
\put(101,49){$20\delta^{1 - t/2}$}
\end{overpic}
\caption{Depiction of $\mathcal{P}$ for $t \in [1,4/3)$. The red sticks have length $\delta^{1 - t/2}$.}\label{fig2}
\end{center}
\end{figure}

\subsection{Case $t \in [1,\tfrac{4}{3})$} Let $\{B_{1},\ldots,B_{m}\} \subset \{B_{1},\ldots,B_{n}\}$ be a maximally separated sub-family. This implies 
\begin{equation}\label{form3} \dist(B_{i},B_{j}) \gtrsim m^{-1/2} \sim \delta^{t/4}, \qquad i \neq j, \end{equation} 
and $\delta^{t/4} \geq \delta^{1 - t/2} \sim \diam(B_{i})$ thanks to the hypothesis $t < 4/3$.

Choose the translations $z_{i} \in \R^{2}$ (as in \eqref{form11}) in such a way that each disc $B_{i}$, $1 \leq i \leq m$, contains exactly one stick $\mathcal{S}_{i} = \mathcal{S}_{i}^{0} + z_{i}$. Since the discs $B_{i}$ are disjoint, the sticks $\mathcal{S}_{i}$ are disjoint, and therefore $|\mathcal{P}| \sim \delta^{-t}$. It remains to check that $\mathcal{P}$ is a $(\delta,t)$-set. 

For $x \in \R^{2}$ and $\delta \leq r \leq \delta^{t/4}$, the disc $B(x,r)$ intersects $\lesssim 1$ discs in $\{B_{1},\ldots,B_{m}\}$ thanks to \eqref{form3}, so (using also the definition of sticks) $|\mathcal{P} \cap B(x,r)| \lesssim r/\delta \leq (r/\delta)^{t}$. For $\delta^{t/4} \leq r \leq 1$, the disc $B(x,r)$ may intersect $\lesssim r^{2}/\delta^{t/2}$ elements from $\{B_{1},\ldots,B_{m}\}$, so 
\begin{displaymath} |\mathcal{P} \cap B(x,r)| \lesssim (\max_{i} |\mathcal{S}_{i}|) \cdot \frac{r^{2}}{\delta^{t/2}} \sim \frac{r^{2}}{\delta^{t}} \leq (r/\delta)^{t}. \end{displaymath}
This completes the proof of Proposition \ref{mainProp} in the cases $t \in [1,\tfrac{4}{3})$.

\subsection{Case $t \in [\tfrac{4}{3},2)$} If $t = \tfrac{4}{3}$, then $m \sim n$, and we may place $O(1)$ sticks inside each disc $B_{1},\ldots,B_{n}$ (if $m < n$, add dummy sticks). After this, the proof is the same as in the case $t < 4/3$. So, we assume that $t > 4/3$, in which case the number $m$ is sticks is "much higher" than the number $n$ of discs.

As already mentioned in Section \ref{s1}, we will place $\sim m/n \sim \delta^{2 - 3t/2}$ sticks $\mathcal{S}_{i}$ inside each disc $B_{j}$. We will next describe, which sticks should be placed inside $B_{j} = B(c_{j},10\delta^{1 - t/2})$, and how they should be placed. See Figure \ref{fig1} for an illustration.

Recall that the directions $\Theta := \{\theta_{1},\ldots,\theta_{m}\}$ of the initial sticks were chosen equally spaced on $S^{1}$. Thus, $|\theta_{k} - \theta_{k + 1}| \sim m^{-1} = \delta^{t/2}$ for $1 \leq k \leq m - 1$. We partition 
\begin{displaymath} \Theta := \bigcup_{j = 1}^{n} \Theta_{j}, \end{displaymath}
where each $\Theta_{j}$ has cardinality $|\Theta_{j}| \sim m/n \sim \delta^{2 - 3t/2}$, and $\Theta_{j}$ remains equally spaced on $S^{1}$. Thus, the separation of neighbouring directions in $\Theta_{j}$ is roughly $\delta^{3t/2 - 2}$. 

For $1 \leq j \leq n$ fixed, we declare that sticks with directions in $\Theta_{j}$ are placed inside a fixed (but arbitrary) disc $B_{j} = B(c_{j},10\delta^{1 - t/2})$. The placement happens as shown in Figure \ref{fig1}: each stick should be placed on such a ray through $c_{j}$ that the direction of the ray coincides with the direction of the stick. Let us be more precise. Let $\mathcal{S}_{i}^{0} = \mathcal{D}_{\delta}(L_{i}^{0})$ be an initial stick with direction $\theta_{i} \in \Theta_{j}$. We choose the translation $z_{i} \in \R^{2}$ so that 
\begin{equation}\label{form4} L_{i} := L_{i}^{0} + z_{i} \subset \bar{B}_{j}, \end{equation}
and $L_{i} \cap \partial B_{j} = \{b_{i}\}$, where $b_{i}$ satisfies $L_{i} \subset [b_{i},c_{j}]$, see again Figure \ref{fig1}. This requirement is fulfilled by either of two "antipodal" points $b_{i} \in \partial B_{j}$, but let us choose the $b_{i}$ satisfying $(b_{i} - c_{j}) \cdot \theta_{i} \geq 0$. We define $\mathcal{S}_{i} := \mathcal{S}_{i}^{0} + z_{i}$. This completes the definition of the translations $z_{i}$, and therefore the definition of $\mathcal{P}$. 

We record here a consequence of the definition of $L_{i}$. Since 
\begin{displaymath} \frac{b_{i} - c_{j}}{10\delta^{1 - t/2}} = \frac{b_{i} - c_{j}}{|b_{i} - c_{j}|} = \theta_{i} \in \Theta_{j},  \end{displaymath} 
it holds $b_{i} \in 10\delta^{1 - t/2}\Theta_{j} + c_{j}$. Since $\Theta_{j}$ is an equally spaced $\delta^{3t/2 - 2}$-separated subset of $S^{1}$, the points $b_{i} \in \partial B_{j}$ form an equally spaced $\sim \delta^{t - 1}$-separated set.

\subsection{The $(\delta,t)$-set property}\label{s:deltaT} We will next show that $\mathcal{P}$ is a $(\delta,t)$-set. First, we consider the case where $x \in \R^{2}$ and $\delta \leq r \leq \tfrac{1}{4}\delta^{1 - t/2}$. Since the discs $B_{j}$ are $\delta^{1 - t/2}$-separated (see Section \ref{s1}), the disc $B(x,r)$ intersects at most one of them, say $B_{j}$. 

To estimate $|\mathcal{P} \cap B(x,r)|$, we may assume that $B(x,r)$ intersects $k \geq 1$ sticks $\mathcal{S}_{i} \subset B_{j}$. Then $B(x,3r)$ intersects $k$ segments $L_{i}$, using the notation \eqref{form4}. We claim that
\begin{equation}\label{form5} k \leq \max\{1,Cr/\delta^{t - 1}\}, \end{equation}
where $C > 0$ is absolute. The moral reason is the following claim:
\begin{claim}\label{c2} If $J \subset \partial B_{j}$ is an arc of length $r \in (0,\diam(\partial B_{j})]$, then 
\begin{displaymath} |\{b_{i} : b_{i} \in J\}| \leq \max\{1,Cr/\delta^{t - 1}\}, \end{displaymath}
where $C > 0$ is absolute.  \end{claim} 
This follows from the equal spacing and $\sim \delta^{t - 1}$-separation of the points $b_{i}$. One has to be a little careful to check that the same estimate persists for the sticks. The enemy is a scenario where all the sticks extend to the centre $c_{j} \in B_{j}$. Then $B(c_{j},\delta)$ would intersect all the $\sim \delta^{2 - 3t/2}$ sticks $\mathcal{S}_{i} \subset B_{j}$, violating \eqref{form5}. The rescue is that the sticks are ten times shorter than the radius of $B_{j}$, so they do not extend close to $c_{j}$.

\begin{figure}[h!]
\begin{center}
\begin{overpic}[scale = 0.9]{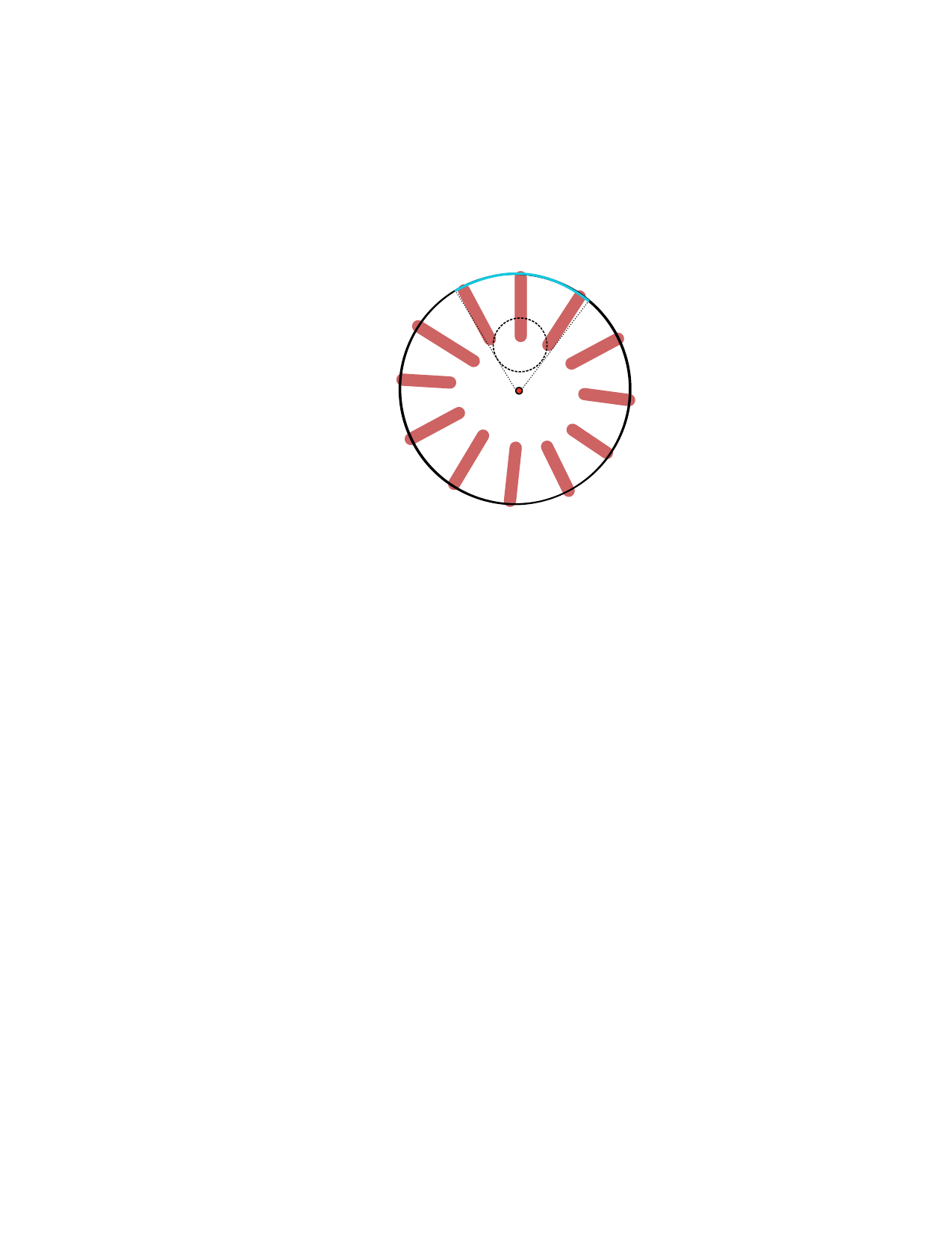}
\put(41,65){\tiny{$B(x,3r)$}}
\put(45,43){$c_{j}$}
\put(46,101){$J$}
\end{overpic}
\caption{Since $B(x,r)$ does not intersect $B(c_{j},\delta^{1 - t/2})$, the "radial projection" $J$ of $B(x,3r)$ to $\partial B_{j}$ has length $|J| \sim r$.}\label{fig3}
\end{center}
\end{figure}

We give the details, see Figure \ref{fig3} for an illustration. Note that since $\mathcal{S}_{i} \cap \partial B_{j} \neq \emptyset$, and $\diam(\mathcal{S}_{i}) \leq 2\delta^{1 - t/2} \leq \tfrac{1}{2}\mathrm{rad}(B_{j})$, the disc $ B(c_{j},5\delta^{1 - t/2})$ has empty intersection with every $\mathcal{S}_{i}$. Therefore, since $B(x,r)$ intersects at least one of the sticks $\mathcal{S}_{i}$ contained in $B_{j}$, it follows from $r \leq \tfrac{1}{4}\delta^{1 - t/2}$ that 
\begin{equation}\label{form6} B(x,3r) \cap B(c_{j},\delta^{1 - t/2}) = \emptyset. \end{equation}
Now, let $J := \{b \in \partial B_{j} : [b,c_{j}] \cap B(x,3r) \neq \emptyset\}$. Alternatively, we may write $J = \rho(B(x,2r))$, where 
\begin{displaymath} \rho(y) := \frac{y - c_{j}}{|y - c_{j}|} \cdot 10\delta^{1 - t/2} \end{displaymath}
is the "radial projection" to $\partial B_{j}$. Note that $\rho$ is $C$-Lipschitz on $B_{j} \, \setminus \, B(c_{j},\delta^{1 - t/2})$, with $C > 0$ absolute. Therefore \eqref{form6} implies $\diam(J) \lesssim r$. Moreover, if $L_{i} \cap B(x,3r) \neq \emptyset$, then also $[b_{i},c_{j}] \cap B(x,3r) \neq \emptyset$, and we infer $b_{i} \in J$. Now \eqref{form5} follows Claim \ref{c2}. In particular, the sticks $\mathcal{S}_{i}$ are disjoint (apply \eqref{form5} with $r \sim \delta \ll \delta^{t - 1}$; here $\delta > 0$ needs to be small enough in terms of $2 - t$).

With \eqref{form5} in hand, and noting that $B(x,r)$ intersects $\lesssim r/\delta$ squares in any fixed stick, we infer (for $x \in \R^{2}$ and $\delta \leq r \leq \tfrac{1}{4}\delta^{1 - t/2}$)
\begin{displaymath} |\mathcal{P} \cap B(x,r)| \lesssim \max\{1,r/\delta^{t - 1}\} \cdot r/\delta = \max\{r/\delta,r^{2}/\delta^{t}\} \leq (r/\delta)^{t}. \end{displaymath}

Consider finally the remaining range $\diam(B_{j}) \sim \tfrac{1}{4}\delta^{1 - t/2} \leq r \leq 1$. Since the discs $B_{j}$ form a maximal $\delta^{1 - t/2}$-separated set of $10\delta^{1 - t/2}$-discs contained in $[0,1]^{2}$,
\begin{displaymath} |\{1 \leq j \leq n : B(x,r) \cap B_{j} \neq \emptyset\}| \lesssim r^{2}/\delta^{2 - t}. \end{displaymath}
Using further that $|\mathcal{P} \cap B_{j}| \lesssim \delta^{-t}/n \sim \delta^{2 - 2t}$ for $1 \leq j \leq n$ fixed, we infer
\begin{displaymath} |\mathcal{P} \cap B(x,r)| \lesssim \delta^{2 - 2t} \cdot (r^{2}/\delta^{2 - t}) = r^{2}/\delta^{t} \leq (r/\delta)^{t}. \end{displaymath}
This completes the proof the of $(\delta,t)$-set property of $\mathcal{P}$, and that of Proposition \ref{mainProp}. \end{proof} 

\section{Proof of Theorem \ref{thm:main}}\label{s3}

We start with the following slightly \emph{ad hoc} definition:

\begin{definition}[Cantor construction]\label{def:CantorConstruction} Given a sequence $\mathcal{P}_{n} \subset \mathcal{D}_{\delta_{n}}([0,1)^{2})$ of families, $n \in \N$, we form a compact set $E \subset [0,1]^{2}$ by a standard \emph{Cantor construction} procedure, described as follows. Let $E_{0} := \cup \mathcal{P}_{0}$. Assume that $E_{n}$, $n \geq 0$, has already been defined, and consists of a union of $\Delta_{n}$-squares $Q_{1},\ldots,Q_{m_{n}}$. To define $E_{n + 1}$, replace each $Q_{k}$, $1 \leq k \leq m_{n}$, by the union of the squares in $\mathcal{P}_{n + 1}$, rescaled inside $Q_{k}$. Set $\Delta_{n + 1} := \Delta_{n} \cdot \delta_{n}$, and define $E_{n + 1}$ as the union of the resulting squares. Finally, set $E := \bigcap_{n} E_{n}$. \end{definition}

The Cantor set $E = E(\{\mathcal{P}_{n}\})$ defined above supports a \emph{canonical probability measure} $\mu$, defined as the weak limit of uniformly distributed (absolutely continuous) probability measures $\mu_{n}$ on the sets $E_{n}$. The measure $\mu$ is moreover related to $\mu_{n}$ as follows:
\begin{equation}\label{form8} \mu(\overline{Q}) \geq \mu_{n}(Q), \qquad Q \in \mathcal{D}_{\Delta_{n}}(E_{n}). \end{equation}
Lemma \ref{DOWLemma} below, taken from \cite[Lemma 5.2]{MR4745881}, gives a condition for $\mu$ to be Frostman. 

\begin{lemma}\label{DOWLemma} Let $E = E(\{\mathcal{P}_{n}\})$ be as in Definition \ref{def:CantorConstruction}, and assume $\{\Delta_{n}\}_{n = 0}^{\infty} \subset 2^{-\N}$ is super-geometrically decaying: for all $\epsilon > 0$ there exists $n_{\epsilon} \in \N$ such that $\Delta_{n} \leq \epsilon \Delta_{n - 1}$ for all $n \geq n_{\epsilon}$.

For each $Q \in \mathcal{D}_{\Delta_{n}}(E)$, assume $|E \cap Q| \geq C^{-1}(\Delta_{n}/\Delta_{n + 1})^{-t}$, and $E \cap Q$ is a $(\Delta_{n + 1},t,C)$-set between scales $\Delta_{n + 1}$ and $\Delta_{n}$, where $C \geq 1$ and $t > 0$ are fixed constants. 

Then, the canonical probability measure on $E$ is $(t - \epsilon)$-Frostman for all $\epsilon > 0$. \end{lemma}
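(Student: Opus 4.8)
The goal is to prove Lemma \ref{DOWLemma}: under the stated hypotheses, the canonical measure $\mu$ on the Cantor set $E$ is $(t - \epsilon)$-Frostman for every $\epsilon > 0$. The plan is to estimate $\mu(B(x,r))$ for an arbitrary ball $B(x,r)$, $0 < r \leq 1$, by locating the scale index $n$ with $\Delta_{n + 1} \leq r \leq \Delta_{n}$, and then exploiting the two hypotheses: (i) each $Q \in \mathcal{D}_{\Delta_n}(E)$ carries roughly uniformly distributed mass, so $\mu(Q) \sim 1/m_n$ where $m_n = |\mathcal{D}_{\Delta_n}(E)|$, up to multiplicative errors controlled by $C$ and the branching numbers; and (ii) the $(\Delta_{n+1}, t, C)$-set property of $E \cap Q$ between scales $\Delta_{n+1}$ and $\Delta_n$ bounds how many $\Delta_{n+1}$-squares of $E$ can lie in $B(x,r)$.

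Concretely, I would first set up notation for the branching: let $N_n := \Delta_n / \Delta_{n+1} \in 2^{\N}$, so $m_{n+1} = m_n \cdot |E \cap Q|$ for $Q \in \mathcal{D}_{\Delta_n}(E)$ — here the hypothesis $|E \cap Q| \geq C^{-1} N_n^{t}$ gives a lower bound on branching, and the $(\Delta_{n+1},t,C)$-set property applied at radius $r = \Delta_n$ gives the matching upper bound $|E \cap Q| \leq C N_n^{t}$. Hence $m_n \sim_C \prod_{k<n} N_k^{t} = (\Delta_0 / \Delta_n)^t$ up to a factor $C^n$, and correspondingly each $Q \in \mathcal{D}_{\Delta_n}(E)$ has $\mu(Q) \leq C^n \Delta_n^{t}$ (using \eqref{form8} and that $\mu_n$ is uniform: $\mu_n(Q) = 1/m_n$). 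For a ball $B(x,r)$ with $\Delta_{n+1} \leq r \leq \Delta_n$: if $r \geq \Delta_n$-scale is the relevant one, $B(x,r)$ meets $\lesssim (r/\Delta_n)^2 \lesssim 1$ squares $Q \in \mathcal{D}_{\Delta_n}(E)$ (since $r \le \Delta_n$ forces $O(1)$ such squares), so $\mu(B(x,r)) \lesssim C^n \Delta_n^{t} \leq C^n r^{t}$. If instead one wants the finer bound using $r \le \Delta_n$, pick the unique (up to $O(1)$) square $Q \in \mathcal{D}_{\Delta_n}(E)$ meeting $B(x,r)$; inside $Q$, $B(x,r)$ meets $\lesssim C (r/\Delta_{n+1})^t$ of the $\Delta_{n+1}$-squares of $E \cap Q$ by the Katz–Tao property; each such $\Delta_{n+1}$-square has $\mu$-mass $\leq C^{n+1}\Delta_{n+1}^{t} \cdot (\text{normalisation within }Q)$. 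Tracking the normalisation, a $\Delta_{n+1}$-subsquare of $Q$ gets $\mu$-mass $\mu(Q)/|E \cap Q| \leq C^n \Delta_n^t / (C^{-1} N_n^t) = C^{n+1}\Delta_{n+1}^t$, so altogether $\mu(B(x,r)) \lesssim C(r/\Delta_{n+1})^t \cdot C^{n+1}\Delta_{n+1}^t = C^{n+2} r^{t}$.

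The remaining point — and this is where the super-geometric decay hypothesis on $\{\Delta_n\}$ enters, and the only genuinely delicate step — is absorbing the spurious factor $C^{n+2}$ into $r^{-\epsilon}$. Given $\epsilon > 0$, I need $C^{n+2} \leq r^{-\epsilon}$ whenever $r \in [\Delta_{n+1}, \Delta_n]$, i.e. $C^{n+2} \leq \Delta_n^{-\epsilon}$, i.e. $(n+2)\log C \leq \epsilon \log(1/\Delta_n)$. Since $\Delta_n \leq \epsilon' \Delta_{n-1} \leq \cdots$ for $n \geq n_{\epsilon'}$ with any prescribed $\epsilon'$, we get $\log(1/\Delta_n) \geq \log(1/\Delta_{n_{\epsilon'}}) + (n - n_{\epsilon'})\log(1/\epsilon')$, which grows at least linearly in $n$ with slope $\log(1/\epsilon')$ as large as we please; choosing $\epsilon'$ small enough (depending on $\epsilon$ and $C$) makes the linear lower bound on $\log(1/\Delta_n)$ dominate $(n+2)\log C / \epsilon$ for all large $n$. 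The finitely many small-$n$ balls (equivalently $r$ bounded below) are handled trivially since $\mu$ is a probability measure: $\mu(B(x,r)) \leq 1 \lesssim r^{t - \epsilon}$ when $r \gtrsim 1$. Combining, $\mu(B(x,r)) \lesssim r^{t-\epsilon}$ for all $r$, which is the $(t-\epsilon)$-Frostman condition. I expect the bookkeeping of the normalisation constants within nested squares (and verifying \eqref{form8} gives exactly $\mu(\overline Q) \geq 1/m_n$) to be the part requiring the most care, but it is routine; the conceptual crux is the super-geometric-decay argument that kills $C^n$.
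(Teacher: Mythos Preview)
The paper does not prove Lemma \ref{DOWLemma}; it simply cites \cite[Lemma 5.2]{MR4745881} and remarks that the proof there proceeds via establishing the $(t-\epsilon)$-Frostman property. So there is no in-paper proof to compare against.

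Your argument is correct and is the standard one: bound $\mu(Q)$ for $Q \in \mathcal{D}_{\Delta_n}(E)$ by $\lesssim C^{n}\Delta_n^{t}$ via the branching lower bound, use the Katz--Tao property inside $Q$ to count $\Delta_{n+1}$-children meeting $B(x,r)$, and then absorb the $C^{n}$ loss into $r^{-\epsilon}$ using super-geometric decay. One small correction: you invoke \eqref{form8} to get $\mu(Q) \leq C^{n}\Delta_n^{t}$, but \eqref{form8} is the inequality $\mu(\overline{Q}) \geq \mu_n(Q)$, which points the wrong way. What you actually need is $\mu(\overline{Q}) \lesssim 1/m_n$, and this follows not from \eqref{form8} but from the uniformity of the construction (the same $\mathcal{P}_{n+1}$ is placed inside every $\Delta_n$-square, so each $\Delta_n$-square of $E_n$ carries exactly mass $1/m_n$ under every $\mu_k$, $k \geq n$; pass to the weak limit using a slightly enlarged open neighbourhood to handle the boundary). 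You already flag the ``bookkeeping of the normalisation constants'' as the careful part, so this is just a matter of citing the right fact there.
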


\begin{remark} The conclusion of  \cite[Lemma 5.2]{MR4745881} is actually that $\Hd E \geq t$, but the proof proceeds via showing the $(t - \epsilon)$-Frostman property of $\mu$ (for every $\epsilon > 0$).\end{remark} 

\begin{proof}[Proof of Theorem \ref{thm:main}] Fix $t \in [1,2)$ and $p > t/(2 - t)$ as in Theorem \ref{thm:main}. Fix also $t < u < v < 2$ such that $p > v/(2 - v)$. Let $\{\delta_{n}\}_{n = 0}^{\infty}$ be a super-geometrically decaying sequence, which additionally decays so rapidly that
\begin{equation}\label{form7} (\delta_{0} \cdots \delta_{n - 1})^{2} \geq \delta_{n}^{(v - u)/2}, \qquad n \geq 1.  \end{equation}

 For each $n \geq 0$, let $\mathcal{P}_{n} \subset \mathcal{D}_{\delta_{n}}([0,1)^{2})$ be the $(\delta_{n},u)$-set with $|\mathcal{P}_{n}| \sim \delta_{n}^{-u}$ given by Proposition \ref{mainProp} with index "$u$" in place of "$t$". Let $\{E_{n}\}$ and $\{\Delta_{n}\}$ be the sets and scales associated to $\{\mathcal{P}_{n}\}$ as in Definition \ref{def:CantorConstruction}; note that $\{\Delta_{n}\}_{n = 0}^{\infty} = \{\delta_{0} \cdots \delta_{n}\}_{n = 0}^{\infty}$ is also super-geometrically decaying. Therefore $E$ satisfies the hypotheses of Lemma \ref{DOWLemma} for some absolute constant $C \geq 1$, and the canonical probability $\mu$ on $E$ is $t$-Frostman.

It remains to show that $\pi_{\theta}\mu \notin L^{p}$ for every $\theta \in S^{1}$. Recall that $\mu$ is the (weak) limit of the uniform probabilities $\mu_{n}$ on the sets $E_{n}$.  Recall from Proposition \ref{mainProp} that for every $\theta \in S^{1}$, there exists a set of squares $\mathcal{S}_{n}(\theta) \subset \mathcal{P}_{n}$ with $|\mathcal{S}_{n}(\theta)| \gtrsim \delta_{n}^{-u/2}$ such that 
\begin{equation}\label{form9} \diam(\pi_{\theta}(\cup \mathcal{S}_{n}(\theta))) \lesssim \delta_{n}. \end{equation} 

For $n \geq 1$, the measure $\mu_{n}$ is a convex combination of rescaled and renormalised copies of the uniform probability on $\cup \mathcal{P}_{n}$, denoted $\nu_{n}$. More precisely,
\begin{displaymath} \mu_{n} = \tfrac{1}{m_{n - 1}} \sum_{j = 1}^{m_{n - 1}} T_{Q_{j}}(\nu_{n}), \qquad \{Q_{1},\ldots,Q_{m_{n - 1}}\} := \mathcal{D}_{\Delta_{n - 1}}(E_{n - 1}). \end{displaymath}
Note that $m_{n - 1} \leq \Delta_{m - 1}^{-2} = (\delta_{0} \cdots \delta_{m - 1})^{-2}$. We plan to ignore all the cubes $Q_{j}$ in this decomposition, except one -- for example $Q := Q_{1} \in \mathcal{D}_{\Delta_{n - 1}}(E_{n - 1})$. Here is what we may conclude from \eqref{form9}. Let $T_{Q} \colon [0,1)^{2} \to Q$ be the rescaling map. Then, for each each $\theta \in S^{1}$, there exists a set of squares 
\begin{displaymath} \mathcal{Q}_{n}(\theta) := T_{Q}(\mathcal{S}_{n}(\theta)) \subset \mathcal{D}_{\Delta_{n}}(E_{n}) \end{displaymath} 
with $|\mathcal{Q}_{n}(\theta)| \gtrsim \delta_{n}^{-u/2}$ (in particular $(T_{Q}\nu_{n})(\cup \mathcal{Q}_{n}(\theta)) \gtrsim \delta_{n}^{u/2}$) such that $\diam(\pi_{\theta}(\cup \mathcal{Q}_{n}(\theta))) \lesssim \Delta_{m - 1} \cdot \delta_{n} \leq \delta_{n}$. It follows that there exists an interval $J_{\theta} \subset \R$ of length $|J_{\theta}| \lesssim \delta_{n}$ such that $\pi_{\theta}(\cup \overline{\mathcal{Q}}_{n}(\theta)) \subset J_{\theta}$. Here $\overline{\mathcal{Q}}_{n}(\theta) := \{\overline{Q} : Q \in \mathcal{Q}_{n}(\theta)\}$. 

Now, recall from \eqref{form8} that $\mu(\overline{Q}) \geq \mu_{n}(Q)$ for all $Q \in \mathcal{D}_{\Delta_{n}}(E_{n})$. This implies
\begin{displaymath} (\pi_{\theta}\mu)(J_{\theta}) \geq \mu_{n}(\cup \mathcal{Q}_{n}(\theta)) \geq \tfrac{1}{m_{n - 1}} (T_{Q}\nu_{n})(\cup \mathcal{Q}_{n}(\theta)) \gtrsim (\delta_{0} \cdots \delta_{n - 1})^{2} \cdot \delta_{n}^{u/2}. \end{displaymath}
Applying H\"older's inequality as in Remark \ref{rem1}, and using \eqref{form7}, we deduce
\begin{displaymath} \delta_{n}^{v/2} \leq (\delta_{0} \cdots \delta_{n - 1})^{2} \cdot \delta_{n}^{u/2} \lesssim \delta_{n}^{(p - 1)/p}\|\pi_{\theta}\mu\|_{L^{p}}, \qquad \theta \in S^{1}. \end{displaymath}
Rearranging, we find $\|\pi_{\theta}\mu\|_{L^{p}} \gtrsim \delta_{n}^{1/p - (2 - v)/2}$ for all $\theta \in S^{1}$. Since $p > 2/(2 - v)$, the right hand side diverges as $n \to \infty$, and we conclude $\pi_{\theta}\mu \notin L^{p}$. \end{proof}

\bibliographystyle{plain}
\bibliography{references}

\end{document}